\newtheorem{thm}{Theorem}[section]
\newtheorem{pro}[thm]{Proposition}
\newtheorem{lem}[thm]{Lemma}
\newtheorem{cor}[thm]{Corollary}
\def\leukfrac#1/#2{\leavevmode
               \kern.1em
                \raise.9ex\hbox{\the\scriptfont0 ${}_#1$}
                \hskip -1pt\kern-.1em
                /\kern-.15em\lower.10ex\hbox{\the\scriptfont0 ${}_#2$}}
\theoremstyle{definition}
\newtheorem{re}[thm]{Remark}
\theoremstyle{remark}
\newtheorem{claim}{Claim}
\def\Int{\mathop{\operator@font Int}\nolimits}
\begin{document}

%%%%%%%%%%% Begin Topmatter %%%%%%%%%%%%%%%%%

\title[Homology manifolds and homogeneous compacta]
{Homology manifolds and homogeneous compacta}

\author{V. Valov}
\address{Department of Computer Science and Mathematics,
Nipissing University, 100 College Drive, P.O. Box 5002, North Bay,
ON, P1B 8L7, Canada} \email{veskov@nipissingu.ca}

\date{\today}
\thanks{The author was partially supported by NSERC
Grant 2025-07173.}

 \keywords{absolute neighborhood retracts, homogeneous spaces, homology groups, homology manifolds, $Z_n$-sets}

\subjclass[2000]{Primary 54C55; Secondary 55M15}
\begin{abstract}
A non-trivial separable metric space $X$ is called an almost homology $n$-manifold if the homology groups $H_k(X,X\backslash\{x\},\mathbb Z)$ are trivial for all $x\in X$ and all $k=0,1,..,n-1$. We provide a necessary and sufficient condition locally compact homogeneous $ANR$-spaces or strongly locally homogeneous $ANR$-spaces to be almost homology $n$-manifolds.  
\end{abstract}
\maketitle\markboth{}{Homology manifolds}
%%%%%%%%%% End topmatter %%%%%%%%%%%%%%%%%%%%%

%%%%%%%%%%%%%%%%%%%%%%%%%%%%%%%%%%%%%%%%%%%%%%%%%%%%%%%%%%%%%%%%
%%%%%%%%%%%%%%%%%%%%%%%%%%%%%%%%%%%%%%%%%%%%%%%%%%%%%%%%%%%%%%%%

%%%%%%%%%%%%%%%%%% TABLE OF CONTENT %%%%%%%%%%%%%%%%%%%%%%%%%%%%%%%%%%

%\tableofcontents

%%%%%%%%%%%%%%%%%%%%%%%%%%%%%%%%%%%%%%%%%%%%%%%%%%%%%%%%%%%%%%%%%%%
%%%%%%%%%%%%%%%%%%%%%%%%%%%%%%%%%%%%%%%%%%%%%%%%%%%%%%%%%%%%%%%%%%%%%%

\section{Introduction}

All spaces are supposed to be locally connected, separable completely metrizable and all maps are continuous. Recall that a space $X$ is homogeneous if for every $x,y\in X$ there exists a homeomorphism $h$ on $X$ with $h(x)=y$. A space $X$ is strongly locally homogeneous if every point $x\in X$ has a base of neighborhoods $U$ such that for every $y,z\in U$ there exists a homeomorphism
$h$ on $X$ satisfying the following condition: $h(y)=z$ and $h$ is the identity on $X\backslash U$. Any region of a strongly locally homogeneous space is homogeneous, strongly locally homogeneous and locally connected \cite{ba}. Basic examples of strongly locally homogeneous spaces of dimension $n$ are the Euclidean space 
$\mathbb R^n$ and the Menger universal $n$-dimensional continuum $\mu_n$ \cite{be}.

Unless stated otherwise the homology groups are singular with integer coefficients $\mathbb Z$. Recall that a homology $n$-manifold is a locally compact $n$-dimensional space $X$ such that $H_k(X,X\backslash\{x\})=0$ for all $x\in X$ and $k<n$ and  $H_n(X,X\backslash\{x\})=\mathbb Z$. Homology manifolds are important objects in geometric topology and arise in many investigations of structural properties of manifolds. The interest to the relation between homology manifolds and homogeneous spaces is based on the following two conjectures.
\begin{itemize}
\item[(1)] Modified Bing-Borsuk conjecture \cite{br1}: If $X$ is an $n$-dimensional homogeneous $ANR$-space, then $X$ is a homology $n$-manifold.
\item[(2)] Homogeneity conjecture \cite{bfmw}: Every connected $ANR$ homology $(n\geq 5)$-manifold satisfying the disjoint disks property is homogeneous.  
\end{itemize}
Some partial answers to the Modified Bing-Borsuk conjecture are given in \cite{bre}, \cite{br} and \cite{br2}. For example Bredon \cite{bre} and Bryant \cite{br2} proved that if $X$ is a homogenous locally compact $ANR$ of dimension $n$ such that all groups $H_k(X,X\backslash\{x\})$, $k\leq n$, are finitely generated, then $X$ is a homology $n$-manifold.

In the present note we provide a necessary and sufficient condition locally compact homogeneous $ANR$-spaces or strongly locally homogeneous $ANR$-spaces  to be almost homology $n$-manifolds. %Note that if $X$ is an  almost homology $n$-manifold then $X$ is of dimension $\geq n$, see Proposition \ref{dim} below.
We have the following results, where $\mathbb B^{n-1}$ denotes the $(n-1)$-dimensional ball.
\begin{thm}
Let $X$ be a locally compact homogeneous $ANR$-space. Then $X$ is an almost homology $n$-manifold if and only if $X$ satisfies the following condition $B(n-1)$: the set of all maps $f:\mathbb B^{n-1}\to X$ such that $f(\mathbb B^{n-1})$ has an empty interior is dense in the function space $C(\mathbb B^{n-1},X)$ equipped with the compact open topology.
\end{thm}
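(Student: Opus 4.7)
My plan is to factor the equivalence through an intermediate condition: every singleton $\{x\}\subset X$ is a $Z_{n-1}$-set, i.e.\ the set $\{f\in C(\mathbb B^{n-1},X):x\notin f(\mathbb B^{n-1})\}$ is dense in $C(\mathbb B^{n-1},X)$. First I would establish, using homogeneity, that this intermediate condition is equivalent to condition $B(n-1)$. The direction from singletons being $Z_{n-1}$-sets to $B(n-1)$ is pure Baire category: for a countable dense $\{x_i\}\subset X$ the sets $\{f:x_i\notin f(\mathbb B^{n-1})\}$ are open in the Polish space $C(\mathbb B^{n-1},X)$ and dense by hypothesis, so their intersection is a dense $G_\delta$ whose members have image missing a dense set and hence nowhere dense. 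The converse direction uses the Effros--van Mill micro-transitivity theorem for the action of the Polish group of homeomorphisms of $X$ on the homogeneous Polish space $X$: given $f_0$, $x$, and a nowhere-dense-image approximant $f_1$ of $f_0$, pick $y$ in the dense complement $X\setminus f_1(\mathbb B^{n-1})$ near $x$, use Effros to produce a homeomorphism $h$ close to the identity on $f_1(\mathbb B^{n-1})$ with $h(x)=y$, and observe that $h^{-1}\circ f_1$ is close to $f_0$ and misses $x$.

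The second step is the key $ANR$ lemma: for any locally compact $ANR$ $X$, $X$ is an almost homology $n$-manifold if and only if every $\{x\}$ is a $Z_{n-1}$-set. For the direction ``$Z_{n-1}$-set implies vanishing local homology'', I would represent a class in $H_k(X,X\setminus\{x\})$ with $k\le n-1$ by a relative cycle $\sigma$ supported in a small $ANR$ neighborhood $U$ of $x$, approximate each singular simplex $\sigma_i\colon\Delta^k\to U$ by $\sigma_i'\colon\Delta^k\to U\setminus\{x\}$ via the $Z_{n-1}$-set property, and assemble the short $ANR$ homotopies $\sigma_i\simeq\sigma_i'$ into a $(k+1)$-chain bounding $\sigma-\sigma'$, showing $[\sigma]=0$ in the relative group. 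For the converse, I would argue by obstruction theory on a fine triangulation of $\mathbb B^{n-1}$: inductively modify $f$ on successive skeleta to avoid $x$, where the obstruction at stage $k+1\le n-1$ lies in the local relative homotopy group $\pi_{k+1}(U,U\setminus\{x\})$ for small $ANR$ neighborhoods $U$ of $x$; the relative Hurewicz theorem, combined with local $LC^{n-1}$-ness of the $ANR$, identifies this group with the vanishing local $H_{k+1}(U,U\setminus\{x\})$.

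Combining the two steps yields the theorem. The main obstacle I anticipate is the converse direction of the second step: translating the algebraic hypothesis (vanishing of local singular homology) into the topological $Z_{n-1}$-set property. This requires careful control of the obstruction-theoretic induction in the $ANR$ setting, and verification that the local complements $U\setminus\{x\}$ are sufficiently connected for relative Hurewicz to apply (via the long exact sequence of the pair and the contractibility of small $ANR$ neighborhoods around $x$). The remaining implications should reduce to standard Baire-category and $ANR$ techniques.
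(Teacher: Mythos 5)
Your overall architecture coincides with the paper's: both factor the equivalence through the intermediate statement that every singleton is a $Z_{n-1}$-set, and your Baire-category argument for ``all points $Z_{n-1}$ $\Rightarrow$ $B(n-1)$'' is exactly the paper's. Your converse in Step 1 (global Effros micro-transitivity applied to a nowhere-dense-image approximant $f_1$, composing with a near-identity homeomorphism moving $x$ off $f_1(\mathbb B^{n-1})$) is correct and is actually more direct than the paper's route, which instead verifies that each point is $LCC^k$ for $k\le n-2$ by an arc-pushing argument using a local Effros-type proposition of Krupski, and then invokes the characterization of $Z_k$-sets in $LC^{k-1}$-spaces via $LCC^{k-1}$. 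That part of your plan is sound.

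The genuine gap is in the hard half of your Step 2: passing from ``$H_k(X,X\setminus\{x\})=0$ for $k\le n-1$'' to ``$\{x\}$ is a $Z_{n-1}$-set.'' Your obstruction-theoretic induction requires identifying $\pi_{k+1}(U,U\setminus\{x\})$ with $H_{k+1}(U,U\setminus\{x\})$ via relative Hurewicz, but at the critical stage $k+1=2$ this fails: relative Hurewicz only computes the quotient of $\pi_2(U,U\setminus\{x\})$ by the action of $\pi_1(U\setminus\{x\})$, and it presupposes that the pair is $1$-connected, which the homological hypotheses (surjectivity of $H_1(U\setminus\{x\})\to H_1(U)$) do not deliver; contractibility of $U$ gives no control over $\pi_1(U\setminus\{x\})$. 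This is not a repairable technicality within your framework, because the implication is false for general locally compact $ANR$s: in the cone over the Poincar\'e homology sphere the cone point has trivial local homology through degree $3$, yet an essential loop in the punctured cone bounds in the cone but not in its complement, so the point is not even a $Z_2$-set. Your proposed proof of this step never uses homogeneity, and homogeneity is exactly what is needed here. The paper closes the gap by quoting Krupski's theorem (proved with the Effros machinery) that in a locally compact homogeneous $ANR$ with the relevant local homology vanishing every point is a homotopical $Z_2$-set, and then applying the Banakh--Cauty--Karassev result that in an $LC^1$-space a homotopical $Z_2$-set which is a homological $Z_{n-1}$-set is a homotopical, hence genuine, $Z_{n-1}$-set. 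You would need to import this (or an equivalent homogeneity-based degree-$2$ argument) to make Step 2 work.
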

  
\begin{thm}
Let $X$ be a strongly locally homogeneous $LC^{n-1}$-space. Then $X$ is an almost homology $n$-manifold if and only $X$ has the property $B(n-1)$.
\end{thm}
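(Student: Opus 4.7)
My plan is to reduce both directions to the classical notion of a $Z_{n-1}$-set: a closed subset $A\subseteq X$ is a $Z_{n-1}$-set if every map $f\colon\mathbb{B}^{n-1}\to X$ can be uniformly approximated by maps with image in $X\setminus A$. The bridge to the homological condition is the fact that, in an $LC^{n-1}$-space, the singleton $\{x\}$ is a $Z_{n-1}$-set if and only if $H_k(X,X\setminus\{x\})=0$ for all $k\leq n-1$. I take this equivalence as the main technical input and use it at both ends of the argument.

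For $(\Leftarrow)$, assume $X$ satisfies $B(n-1)$; I would prove that every singleton $\{x\}$ is a $Z_{n-1}$-set, whence the almost homology $n$-manifold property follows from the bridge. Fix $x\in X$, a map $f\colon\mathbb{B}^{n-1}\to X$, and $\varepsilon>0$. Strong local homogeneity supplies a neighbourhood $U$ of $x$ of diameter less than $\varepsilon$ such that for any $y,z\in U$ some self-homeomorphism of $X$ sends $y$ to $z$ and restricts to the identity off $U$. Apply $B(n-1)$ to pick $g$ with $d(f,g)<\varepsilon$ and $g(\mathbb{B}^{n-1})$ of empty interior; being nowhere dense, $g(\mathbb{B}^{n-1})$ misses some $y\in U$. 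Let $h$ be a self-homeomorphism of $X$ with $h(x)=y$ and $h|_{X\setminus U}=\mathrm{id}$; since $h$ preserves $U$ setwise, $h^{-1}\circ g$ differs from $g$ by at most $\mathrm{diam}(U)<\varepsilon$ in the uniform metric, and it avoids $x$ because $g$ avoids $y=h(x)$. This provides the required $Z_{n-1}$-approximation.

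For $(\Rightarrow)$, assume $X$ is an almost homology $n$-manifold; the bridge yields that every singleton is a $Z_{n-1}$-set. Fix a countable dense set $D\subseteq X$, which exists by separability. Because $X$ is completely metrizable and $\mathbb{B}^{n-1}$ is compact, the function space $C(\mathbb{B}^{n-1},X)$ is a Polish space under the compact-open (equivalently uniform) topology. For each $d\in D$ the set
\[
V_d=\{g\in C(\mathbb{B}^{n-1},X):d\notin g(\mathbb{B}^{n-1})\}
\]
is open (because $g(\mathbb{B}^{n-1})$ is compact) and dense (because $\{d\}$ is a $Z_{n-1}$-set). The Baire category theorem makes $\bigcap_{d\in D}V_d$ dense in $C(\mathbb{B}^{n-1},X)$; every $g$ in this intersection satisfies $g(\mathbb{B}^{n-1})\cap D=\emptyset$, so its image has empty interior, establishing $B(n-1)$.

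The main obstacle I anticipate is the bridge itself under only the $LC^{n-1}$-hypothesis. Under the $ANR$-hypothesis of Theorem 1.1 the equivalence between the $Z_{n-1}$-set condition on $\{x\}$ and the vanishing of the relative singular homology groups $H_k(X,X\setminus\{x\})$ for $k\leq n-1$ is classical, combining relative Hurewicz with general-position arguments; at the $LC^{n-1}$-level of Theorem 1.2 the cleanest route appears to be a direct inductive approximation of the simplices of a relative singular $k$-cycle by simplices into $X\setminus\{x\}$, using the $LC^{n-1}$-extension property dimension by dimension to assemble the chain homotopy. Once this equivalence is secured, the remainder of the argument is essentially bookkeeping.
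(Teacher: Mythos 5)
Your sufficiency argument ($B(n-1)\Rightarrow$ almost homology $n$-manifold) is essentially sound and is in fact slicker than the paper's: pushing $g(\mathbb{B}^{n-1})$ off $x$ by a self-homeomorphism supported in a small $U$ given by strong local homogeneity shows directly that every point is a $Z_{n-1}$-set, whereas the paper reaches the same conclusion by verifying the $LCC^k$ property via an arc-and-clopen-set argument. For this direction you only need the easy half of your ``bridge,'' namely that a $Z_{n-1}$-point in an $LC^{n-1}$-space is a homological $Z_{n-1}$-point (Proposition 2.1(i),(iii) of the paper), which is standard. Your Baire-category argument for extracting $B(n-1)$ from ``every point is a $Z_{n-1}$-set'' also matches the paper.

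The genuine gap is in the necessity direction, where you need the other half of the bridge: that $H_k(X,X\setminus\{x\})=0$ for $k\le n-1$ forces $\{x\}$ to be a $Z_{n-1}$-set. This implication is \emph{false} for general $LC^{n-1}$-spaces and even for ANRs, so no amount of chain-level approximation or relative Hurewicz bookkeeping can establish it without using the homogeneity hypothesis. Concretely, if $\Sigma$ is a homology $3$-sphere with nontrivial fundamental group and $X=c\Sigma$ is its cone, then the cone point $v$ satisfies $H_k(X,X\setminus\{v\})\cong\widetilde H_{k-1}(\Sigma)=0$ for $k\le 3$, yet $v$ is not even a $Z_2$-set: a coned-off essential loop in $\Sigma$ cannot be pushed off $v$, because $X\setminus\{v\}$ deformation retracts to $\Sigma$ and the loop remains essential there. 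The obstruction is $\pi_1$, which homology does not see; this is exactly why the paper's Proposition 2.1(iv) converts homological $Z_k$-sets to $Z_k$-sets only for $k\le 1$, and why Remark 2.2 (the Menger compactum) warns against the equivalence you posit. The paper's actual proof of necessity must first show that every point is a \emph{homotopical} $Z_2$-set (Proposition 3.3), and this step uses strong local homogeneity in an essential way: arcs between dense points are $Z_1$-sets, circles are pushed off them, and the puncture is transported along an arc by small supported homeomorphisms to show the set $M$ of admissible punctures is clopen. Only after that does Proposition 2.1(v) upgrade ``homological $Z_{n-1}$'' to ``homotopical $Z_{n-1}$,'' hence to $Z_{n-1}$. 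Your proposal never invokes strong local homogeneity in the necessity direction, so the step you flag as ``the main obstacle'' is not merely technical --- as outlined it cannot be completed.
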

Every nowhere locally compact space satisfies the condition $B(n)$ for every $n$. Moreover, if $X$ is an almost homology $n$-manifold then $X$ is of dimension $\geq n$, see Proposition \ref{dim} below. Therefore, Theorem 1.2 implies the following corollaries:
\begin{cor}
A strongly locally homogeneous $LC^{n-1}$-space is an almost homology $n$-manifold provided it is  nowhere locally compact.
\end{cor}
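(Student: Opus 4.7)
The plan is to apply Theorem 1.2 directly: since $X$ is a strongly locally homogeneous $LC^{n-1}$-space, it suffices to verify that $X$ satisfies the density condition $B(n-1)$. I will in fact show something stronger, namely that under the nowhere local compactness hypothesis every map $f\from \mathbb B^{n-1}\to X$ automatically has image with empty interior, which trivially implies $B(n-1)$.

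The key step is a short topological observation. Let $f\from\mathbb B^{n-1}\to X$ be continuous. Then $K:=f(\mathbb B^{n-1})$ is compact in $X$. Suppose for contradiction that $\Int_X K$ is nonempty, and pick a nonempty open set $U\con \Int_X K$ together with a point $x\in U$ and a smaller open neighbourhood $V$ of $x$ with $\CL{V}\con U$. Since $\CL{V}$ is a closed subset of the compact set $K$, it is compact. But then $V$ is a neighbourhood of $x$ with compact closure, so $x$ has a compact neighbourhood in $X$. This contradicts the assumption that $X$ is nowhere locally compact.

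Having established that every continuous map from $\mathbb B^{n-1}$ into $X$ has image with empty interior, the set described in $B(n-1)$ is the whole function space $C(\mathbb B^{n-1},X)$, and so is trivially dense. Theorem 1.2 then gives that $X$ is an almost homology $n$-manifold.

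There is essentially no obstacle to overcome here: the entire content of the corollary is packaged inside Theorem 1.2, and the only thing left to check is the elementary remark that a compact subset of a nowhere locally compact space has empty interior. This is also precisely the observation noted immediately before the corollary (``every nowhere locally compact space satisfies $B(n)$ for every $n$''), so the corollary is simply the combination of that remark with Theorem 1.2.
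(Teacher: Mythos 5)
Your proposal is correct and follows exactly the route the paper intends: the remark preceding the corollary (nowhere local compactness forces every compact image $f(\mathbb B^{n-1})$ to have empty interior, hence $B(n-1)$ holds vacuously) combined with the sufficiency direction of Theorem 1.2. You merely fill in the elementary regularity argument that the paper leaves implicit.
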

\begin{cor}
There is no  finite-dimensional strongly locally homogeneous $ANR$-space which is  nowhere locally compact.
\end{cor}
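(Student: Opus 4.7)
The plan is to derive Corollary 1.4 by chaining together the two results that immediately precede it, namely Corollary 1.3 and the dimensional estimate announced as Proposition \ref{dim}. Suppose, for the sake of contradiction, that $X$ is a finite-dimensional strongly locally homogeneous $ANR$ which is nowhere locally compact. Since every $ANR$ is $LC^{k}$ for all $k\geq 0$, the space $X$ is in particular $LC^{n-1}$ for every natural number $n$.

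Next I would invoke Corollary 1.3 for every $n$: because $X$ is strongly locally homogeneous, $LC^{n-1}$, and nowhere locally compact, it is an almost homology $n$-manifold. Then the announced Proposition \ref{dim} gives $\dim X \geq n$. Since $n$ was arbitrary, this forces $\dim X = \infty$, contradicting the assumption that $X$ is finite-dimensional.

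There is essentially no obstacle here, as all the work is done by Corollary 1.3 and Proposition \ref{dim}; the only thing to check is the routine observation that an $ANR$ is automatically $LC^{n-1}$ for every $n$, so that Corollary 1.3 can indeed be applied once for each $n$ to the same space $X$.
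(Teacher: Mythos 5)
Your argument is correct and is exactly the one the paper intends: the remarks preceding Corollaries 1.3 and 1.4 (nowhere locally compact implies $B(n)$ for all $n$, an almost homology $n$-manifold has dimension $\geq n$, and an $ANR$ is $LC^k$ for all $k$) are meant to be chained together via Corollary 1.3 for every $n$, forcing infinite dimension. Nothing is missing.
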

Recall that $X$ is an $ANR$-space \cite{bo} if for every embedding of $X$ as a closed subset of a metrizable space $Y$ there is a neighborhood $U$ of $X$ in $Y$ and a retraction $r:U\to X$, i.e. a continuous map $r$ with $r(x)=x$ for all $x\in X$. Every $ANR$-space $X$ is locally contractible, so it is $LC^k$ for all $k$, i.e. for every $x\in X$ and its neighborhood there exists another neighborhood $V$ of $x$ such that the inclusion $V\hookrightarrow U$ indices trivial homomorphisms between the homotopy groups $\pi_m(V)$ and $\pi_m(U)$ for all $m\leq k$.

The following condition $LS^{n-2}$ was considered in \cite{tv}: for every $x\in X$ and its neighborhood $U$ there is another neighborhood $V\subset U$ such that every map $f:\mathbb S^{n-2}\to V$ is approximated by maps $f'\in C(\mathbb S^{n-2},V)$ with $f'(\mathbb S^{n-2})$ not separating $U$ (here $\mathbb S^{n-2}$ is the $(n-2)$-dimensional sphere).
It was shown \cite[Theorem 4.5]{tv} that every point of a locally compact homogeneous $ANR$-space satisfying $LS^{n-2}$ is $Z_{n-1}$-set. According to Theorem 1.1, condition $LS^{n-2}$ implies $B(n-1)$ for every locally compact homogeneous $ANR$-space. 

\section{Prof of Theorem 1.1}
A closed set $A\subset X$ is a $Z_n$-set in $X$ if the function space $C(\mathbb B^n,X)$ contains a dense subset of maps $f$ with
$f(\mathbb B^n)\cap A=\varnothing$. This is equivalent to the following: if $\rho$ is any metric agreeing with the topology of $X$, then for every
$f\in C(\mathbb B^n,X)$ and every $\varepsilon >0$ there exists $g\in C(\mathbb B^n,X)$ such that $g(\mathbb B^n)\cap A=\varnothing$ and $\rho(f(z),g(z))<\varepsilon$ for all $z\in\mathbb B^n$. In such a case we say that $f$ and $g$ are $\varepsilon$-close.
It is easily seen that every $Z_n$-set in a space $X$ is $Z_m$ for every $m\leq  n$.
There is a homology analogue of $Z_n$-sets: a closed set $A\subset X$ is called a homological $Z_n$-set in $X$ \cite{bck} if $H_k(U,U\backslash A)=0$ for all $k\leq n$ and all open sets $U\subset X$. It follows from the excision axiom that a point $x\in X$ is a homological $Z_n$-set in $X$ if and only $H_k(X,X\backslash\{x\})=0$ for all $k\leq n$. 
We have the following relations between $Z_n$-sets and homological $Z_n$-sets:
\begin{pro}\cite{bck} Let $X$ be a space. Then
\begin{itemize}
\item[(i)] If $X$ is $LC^n$, then a set $A\subset X$ is $Z_n$ if and only $A$ is homotopical $Z_n$, i.e. for any admissible metric $\rho$ on $X$, $\varepsilon >0$ and a map $f\in C(\mathbb B^n,X)$ there is a map $f':\mathbb B^n\to X\backslash A$ and an $\varepsilon$-small homotopy between  $f$ and $f'$.
\item[(ii)] Each homotopical $Z_n$-set in $X$ is a $Z_n$-set in $X$.
\item[(iii)] Each homotopical $Z_n$-set in $X$ is a homological $Z_n$-set in $X$.
\item[(iv)] Each homological $Z_k$-set in $X$ is a $Z_k$-set in $X$ for any $k\in\{0,1\}$. 
\item[(v)] If $X$ is $LC^1$, then a homotopical $Z_2$-set in $X$ is a homotopical $Z_n$-set in $X$ if and only if $A$ is a homological $Z_{n}$-set in $X$.
%\item[(vi)] A homotopical $Z_2$-set in an $LC^1$-space $X$ is a homotopical $Z_n$-set in $X$ if and only if it is a homological $Z_n$-set in $X$. 
\end{itemize}
\end{pro}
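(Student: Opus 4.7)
Since Proposition 2.1 is quoted from \cite{bck}, my plan is to sketch the standard $Z$-set techniques underlying each of the five implications, and to flag which step I expect to be the genuinely delicate one.

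The easy parts are (ii) and (iv). For (ii), a homotopy witnessing the homotopical $Z_n$-property of $A$ terminates in a map into $X\backslash A$ that is $\varepsilon$-close to the original, which is exactly what the $Z_n$-property demands. For (iv) with $k=0$, the vanishing of $H_0(U,U\backslash A)$ for every open $U$ forces $U\backslash A$ to be dense in $U$, hence $A$ is nowhere dense and so a $Z_0$-set. The case $k=1$ takes slightly more: given $\gamma:\mathbb B^1\to X$, subdivide $\mathbb B^1$ into sub-intervals each mapped into a small open $U_i$ where both $H_0$ and $H_1$ of $(U_i,U_i\backslash A)$ vanish, produce chain-level pushes of each segment off $A$, and realise these as path-level pushes using the standing local path-connectedness of $X$.

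For (i), the non-trivial direction converts a $Z_n$-approximation $f'$ of a map $f:\mathbb B^n\to X$ into an $\varepsilon$-small homotopy between $f$ and $f'$. I would build the homotopy cellwise on a fine triangulation of $\mathbb B^n\times[0,1]$, using the $LC^n$-hypothesis to extend the partial homotopy across each new cell without leaving a prescribed small neighbourhood. For (iii), I would represent a relative $k$-cycle in $(U,U\backslash A)$, $k\leq n$, by a chain supported on a simplicial complex of dimension at most $n$, and then apply the homotopical $Z_n$-property with $\varepsilon$ small enough that the witnessing homotopy stays inside $U$; the homotopy is itself a chain-level nullhomology of the cycle relative to $U\backslash A$.

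The main obstacle is (v): under $LC^1$, promoting a homotopical $Z_2$-set which is also homological $Z_n$ to a homotopical $Z_n$-set. The natural plan is an induction on the skeleta of a fine triangulation of $\mathbb B^n\times[0,1]$, extending a partial homotopy into $X\backslash A$ one cell at a time. The obstructions live in relative homotopy groups $\pi_k(U,U\backslash A)$; the $LC^1$-hypothesis supplies enough control on fundamental groups that a relative Hurewicz argument identifies these, for $2\leq k\leq n$, with $H_k(U,U\backslash A)$, which vanish by the homological $Z_n$-hypothesis. The delicate point is to keep the homotopy $\varepsilon$-small throughout the induction, which in turn forces a careful choice of nested neighbourhoods at each scale, using the $LC^k$-type consequences of $LC^1$ together with the vanishing of the homological obstructions.
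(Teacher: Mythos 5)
The paper offers no proof of this statement: Proposition 2.1 is imported verbatim from Banakh--Cauty--Karassev \cite{bck} and used as a black box, so there is no in-paper argument to measure yours against. Your treatments of (i), (ii) and the $k=0$ case of (iv) are the standard ones and are fine; for $k=1$ in (iv) you do not even need the standing local path-connectedness, since a singular $1$-chain in $U_i\backslash A$ with boundary $x_{j+1}-x_j$ already forces $x_j$ and $x_{j+1}$ into the same path component of $U_i\backslash A$ ($H_0$ being free abelian on path components), after which the exact sequence of the pair $(U_i,U_i\backslash A)$ does the work.

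There are, however, genuine gaps in (iii) and (v). In (iii), a relative singular $k$-cycle $z=\sum n_i\sigma_i$ is not a map of $\mathbb B^n$: to use the homotopical $Z_n$-property you must push all the simplices of $z$ off $A$ \emph{coherently}, so that the homotopies agree on the faces that cancel in $\partial z$; this requires the push-off property for maps of a finite $k$-dimensional complex (the pattern of the chain), and deducing that from the $\mathbb B^n$-version is itself a nontrivial skeletal induction needing push-offs relative to subcomplexes already mapped into $X\backslash A$. Moreover, taking ``$\varepsilon$ small enough that the homotopy stays inside $U$'' is not sufficient: for the prism chain $P(\partial z)$ to lie in $U\backslash A$ (which is what makes the homotopy a nullhomology of $z$ \emph{relative to} $U\backslash A$) you also need $\varepsilon<\rho(\operatorname{supp}(\partial z),A)$, a condition you never impose. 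In (v), the Hurewicz step is misattributed: $LC^1$ by itself does not identify $\pi_k(U,U\backslash A)$ with $H_k(U,U\backslash A)$; the relative Hurewicz theorem needs the pair to be $(k-1)$-connected (your inductive hypothesis) \emph{and} the action of $\pi_1(U\backslash A)$ on $\pi_k(U,U\backslash A)$ to be trivial, and the latter is exactly what the homotopical $Z_2$-hypothesis supplies via $\pi_1(U\backslash A)\to\pi_1(U)$ being an isomorphism on small neighborhoods. Finally $(U,U\backslash A)$ is not a CW pair, so the whole argument must be run in a controlled, all-scales form with nested neighborhoods; you correctly name this as the delicate point, but naming it is not resolving it --- that is precisely the content of the cited theorem of \cite{bck}.
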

\begin{re}
Not every $Z_n$-set is a homological $Z_n$ even for $LC^{n-1}$-spaces. For example, every point of the $n$-dimensional universal Menger compactum $\mu^n$ is a $Z_n$-set \cite{be}, but not a homological $Z_n$-set. Indeed, otherwise $\mu^n$ would be an almost homology $(n+1)$-manifold, so by Proposition 3.1  $\dim\mu^n\geq n+1$. 
\end{re}
%In the class of locally compact homogeneous $ANR$-spaces $X$ the following holds:
\begin{thm} Let $X$ be a locally compact homogeneous $ANR$-space. Then $X$ 
is an almost homology $n$-manifold if and only if $\{x\}$ is a $Z_{n-1}$-set in $X$ for all $x\in X$.
\end{thm}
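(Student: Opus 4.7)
The approach is to leverage Proposition 2.1 to relate the $Z_{n-1}$-set property to the homological $Z_{n-1}$-set property for points. By the excision-based observation preceding Proposition 2.1, a point $x$ is a homological $Z_{n-1}$-set in $X$ exactly when $H_k(X,X\backslash\{x\})=0$ for all $k\leq n-1$. Hence $X$ is an almost homology $n$-manifold if and only if every singleton is a homological $Z_{n-1}$-set, and the task reduces to showing that in a locally compact homogeneous $ANR$ the two classes of properties coincide for points.

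The direction ``$Z_{n-1}$-set $\Rightarrow$ almost homology $n$-manifold'' is formal and does not require homogeneity: since $X$ is an $ANR$ it is $LC^{n-1}$, so Proposition 2.1(i) gives that $\{x\}$ is a homotopical $Z_{n-1}$-set, and Proposition 2.1(iii) then yields that $\{x\}$ is a homological $Z_{n-1}$-set, completing this direction.

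For the converse, assume each $\{x\}$ is a homological $Z_{n-1}$-set. For $n\leq 2$, Proposition 2.1(iv) directly produces the $Z_{n-1}$-set property. For $n\geq 3$, the plan is to invoke Proposition 2.1(v): since $X$ is $LC^1$, establishing that $\{x\}$ is a homotopical $Z_2$-set would allow us to deduce, from the stronger homological $Z_{n-1}$ hypothesis, that $\{x\}$ is a homotopical $Z_{n-1}$-set, after which Proposition 2.1(i) converts this to the $Z_{n-1}$-set property.

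The main obstacle is therefore the intermediate step of showing that each point is a homotopical $Z_2$-set (equivalently, by (i), a $Z_2$-set), using only homological $Z_{n-1}$-ness together with homogeneity. Proposition 2.1(iv) already yields that $\{x\}$ is a $Z_1$-set, and the upgrade to $Z_2$ is where homogeneity is essential. Given $f\colon\mathbb B^2\to X$ and $\varepsilon>0$, I would first use the $Z_1$-set property to perturb $f|_{\mathbb S^1}$ to a map missing $x$, then exploit the vanishing $H_2(U,U\backslash\{x\})=0$ for small $ANR$-neighborhoods $U\ni x$, together with the $LC$-properties of $X$, to fill in a $2$-disk in $X\backslash\{x\}$ extending the new boundary. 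Homogeneity is needed to guarantee that small homeomorphisms of $X$ supported near $x$ can make this filling uniformly $\varepsilon$-close to the original $f$; producing a filling that is \emph{both} uniformly small and avoids $x$, rather than merely an unrestricted null-homotopy, is the technically delicate point of the argument.
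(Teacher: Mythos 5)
Your overall architecture matches the paper's: reduce to the equivalence between ``$\{x\}$ is a $Z_{n-1}$-set'' and ``$\{x\}$ is a homological $Z_{n-1}$-set,'' dispose of the direction $Z_{n-1}\Rightarrow$ homological via Proposition 2.1(i) and (iii) (using that an $ANR$ is $LC^{n-1}$), handle $n\le 2$ by Proposition 2.1(iv), and for $n\ge 3$ route through Proposition 2.1(v) by first showing each point is a homotopical $Z_2$-set. Up to that point you are reproducing the paper's proof of Theorem 2.3 exactly.

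The gap is the step you yourself flag as the ``main obstacle'': that in a locally compact homogeneous $ANR$ which is an almost homology $n$-manifold ($n\ge 3$), every point is a homotopical $Z_2$-set. The paper does not prove this either; it cites Krupski \cite{kr} (Proposition 1.8 and Theorem 2.6), and that citation is carrying the real weight of the theorem. Your sketch of a replacement argument is not on a path that closes. You propose to use $H_2(U,U\backslash\{x\})=0$ to ``fill in a $2$-disk in $X\backslash\{x\}$,'' but passing from vanishing relative homology in degree $2$ to an actual homotopical filling of a circle in $U\backslash\{x\}$ requires control of $\pi_1(U\backslash\{x\})$ (a Hurewicz-type step), and the local simple connectivity of the complement of the point is essentially the statement you are trying to prove. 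The argument that actually works --- visible in Krupski's proof and mirrored in the paper's own Proposition 3.3 and Claim 2 --- is different in kind: one uses the homological $Z_{n-1}$-ness of points to show (via \cite[Theorem 4.3]{bck}) that \emph{arcs} are homological, hence actual, $Z_1$-sets, so that a generic circle does not separate a connected neighborhood; one then connects the point $p$ by an arc $P$ to a point off the image of the disk and runs an open-and-closed argument on the set $M\subset P$ of points over which the disk can be pushed, using the Effros-type Proposition \ref{effros} to move the puncture along $P$ by small homeomorphisms. Without either that argument or the explicit appeal to Krupski, your proof of the forward direction for $n\ge 3$ is incomplete.
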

\begin{proof}
If $X$ is a locally compact almost homology $n$-manifold, then every $\{x\}$ is a homological $Z_{n-1}$-set in $X$. Then by Proposition2.1(iv), every point is a $Z_{n-1}$-set in $X$ provided $n\in\{1,2\}$. If $n\geq 3$,  
according to \cite[Proposition 1.8 and Theorem 2.6]{kr}, every 
$\{x\}$ is a homotopical $Z_2$-set in $X$. Hence, by Proposition 2.1(v), all points are $Z_{n-1}$-set in $X$. The other implication follows from the fact that every $Z_{n-1}$-set in an $ANR$-space is a homological $Z_{n-1}$-set, see Proposition 2.1.
\end{proof}

{\em Proof of Theorem $1.1$.} Suppose $X$ is a locally compact homogeneous $ANR$-space which is an almost homology $n$-manifold. By Theorem 2.3, every point of $X$ is a $Z_{n-1}$-set in $X$. Choose a countable dense subset $\{x_k\}_{k\geq 1}$ of $X$. Then for each $k$ the set $G_k$ of all maps $f\in C(\mathbb B^{n-1},X)$ with $x_k\not\in f(\mathbb B^{n-1})$ is open in dense in $C(\mathbb B^{n-1},X)$. So, $G=\bigcap_{k\geq 1}G_k$ is also dense in $C(\mathbb B^{n-1},X)$. Observe that the interior of $f(\mathbb B^{n-1})$ is empty for every $f\in G$. 

To proof the other implication of Theorem 1.1, we need the following result of Krupski \cite{kr}. 
\begin{pro}\cite{kr}\label{effros}
Let $X$ be a locally compact homogeneous space and $\rho$ be a metric generating the topology of the one-point compactification $X^*$ of $X$.
Suppose $U$ is an open neighborhood of $x\in X$ with compact closure, $0<\varepsilon<\frac{1}{2}\rho(x,X\backslash U)$ and $K=X\backslash N_{\varepsilon}(X\backslash U)$, where $N_{\varepsilon}(X\backslash U)$ denotes the open $\varepsilon$-neighborhood of $X\backslash U$. Then there exists 
a $\delta>0$ such that if $y\in U$ and $\rho(x,y)<\delta$, we have a mapping $\lambda:U\to U$ which is $\varepsilon$-close to the identity on $U$, $\lambda|K$ is a homeomorphism and $\lambda(y)=x$.
\end{pro}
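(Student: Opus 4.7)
The plan is to deduce this strengthened microhomogeneity statement from the classical Effros theorem for Polish group actions, and then to convert the global homeomorphism it produces into a map defined only on $U$ by a partition-of-unity cut-off supported inside $U$.

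First I would invoke Effros's theorem applied to the transitive evaluation action of the Polish group of self-homeomorphisms of $X^{*}$ on $X$. Microtransitivity supplies, for every $\eta>0$ and every $x\in X$, a number $\delta>0$ such that whenever $\rho(x,y)<\delta$ there exists a homeomorphism $h\colon X\to X$ with $h(y)=x$ and $\sup_{z\in X}\rho(h(z),z)<\eta$. Fixing $\eta<\varepsilon/4$ and, if necessary, shrinking $\delta$ so that $\rho(x,y)<\delta$ forces $y\in K$, the triangle inequality applied to the definition $K=X\backslash N_{\varepsilon}(X\backslash U)$ yields $\rho(h(z),X\backslash U)\geq\varepsilon-\eta>0$ for every $z\in K$, so $h(K)\subset U$ and $h|_{K}\colon K\to U$ is a homeomorphism onto its image.

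To build $\lambda$, pick an open set $W$ with $K\subset W\subset\overline{W}\subset U$ together with a continuous $\varphi\colon U\to[0,1]$ equal to $1$ on $K$ and to $0$ outside $W$. Produce a homotopy $H\colon U\times[0,1]\to X$ with $H_{0}=h|_{U}$, $H_{1}=\mathrm{id}_{U}$, and $\sup_{z,t}\rho(H_{t}(z),z)<\varepsilon$; such a homotopy is available in locally contractible settings because $h$ is globally $\eta$-close to the identity. Now define $\lambda(z)=H_{1-\varphi(z)}(z)$. On $K$ we have $\lambda(z)=H_{0}(z)=h(z)$, so $\lambda|_{K}$ is a homeomorphism and $\lambda(y)=h(y)=x$; on $U\backslash W$ the map $\lambda$ reduces to the identity, which together with the size control on $H$ keeps $\lambda(U)\subset U$; and $\lambda$ is $\varepsilon$-close to the identity throughout by construction.

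The main obstacle is the controlled-homotopy step. In a general metric space there is no convex combination available, so one must exploit some geometric structure on $X$ to connect $h|_{U}$ to the identity by an $\varepsilon$-small deformation; in the intended applications this is provided by the $ANR$ structure, but one may also get by with local contractibility in the presence of suitable metric control. Once the homotopy $H$ has been produced, the verification of the four required properties of $\lambda$ is routine.
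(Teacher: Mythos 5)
The paper does not actually prove this proposition: it is imported verbatim from Krupski \cite{kr}, so there is no internal proof to compare yours against. Judged on its own terms, your first step is sound and is the standard one. The homeomorphism group of the compactum $X^*$ (or its closed subgroup fixing the point at infinity) is Polish, it acts transitively on the Polish space $X$ because every homeomorphism of a locally compact space extends over the one-point compactification, and Effros's theorem then yields, for each $\eta>0$, a $\delta>0$ such that $\rho(x,y)<\delta$ produces a global homeomorphism $h$ with $h(y)=x$ and $\sup_{z}\rho(h(z),z)<\eta$; your observation that this forces $h(K)\subset U$ is also correct.

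The second step is where the argument genuinely breaks. The homotopy $H$ from $h|_U$ to $\mathrm{id}_U$ with $\varepsilon$-small tracks, on which the whole cut-off $\lambda(z)=H_{1-\varphi(z)}(z)$ rests, does not exist under the hypotheses of the proposition: $X$ is only assumed to be a locally compact homogeneous space, with no local contractibility or $ANR$ assumption anywhere in the statement. For a totally disconnected example such as the Cantor set, every homotopy is constant in the time variable, so $h|_U$ is homotopic to $\mathrm{id}_U$ only if $h|_U=\mathrm{id}_U$; yet the proposition is true there (one takes a homeomorphism supported in a small clopen set). So the interpolation mechanism is not the right general device, and the ``main obstacle'' you flag is not a technicality but the actual missing content. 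Even in the $ANR$ setting, where sufficiently close maps are indeed joined by small homotopies, your verification of $\lambda(U)\subset U$ has a quantitative error: points of $W\backslash K$ may lie within $\varepsilon$ of $X\backslash U$, and your homotopy is allowed to move them by up to $\varepsilon$, so they can exit $U$. To repair this you would need the homotopy tracks to be much smaller than $\varepsilon$ and $W$ to be bounded away from $X\backslash U$ by more than the track size. As it stands, the proposal establishes the global Effros homeomorphism but not the localized map $\lambda$ that the proposition asserts.
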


Let prove the sufficiency in Theorem 1.1.
Suppose $X$ is a locally compact homogeneous $ANR$-space with the property $B(n-1)$. If $n=1$, then $X$ doesn't contain any isolated point. Hence, every point of $X$ is a $Z_0$-set. If $n\geq 2$, we use the well-known fact 
(see for example \cite[Lemma 2.1]{vv1}) that a closed nowhere dense subset $A$ of a metrizable $LC^{k-1}$-space $Y$ is a $Z_k$-set in $Y$ iff $A$ is $LCC^{k-1}$ in $Y$. This means that for every $y\in A$ and its neighborhood $U$ in $Y$ there is another neighborhood $V$ of $y$ in $Y$ such that $V\subset U$ and every map $f:\mathbb S^m\to V\backslash A$ has a continuous extension $\widetilde f:\mathbb B^{m+1}\to U\backslash A$, where $m\leq k-1$. 
So, it suffices to show that every $\{x\}$ is $LCC^{k}$ in $X$ for all $k\leq n-2$. 
%Krupski \cite[Theorem 2.6]{kr} has shown that every point of a homogeneous compact space is $LCC^1$. We are using Krupski's idea from \cite{kr} to show this is true for all $i=2,3,..,n-2$ provided the space satisfies $B(n-1)$.
\begin{claim}
$X$ has the property $B(k)$ for all $k\leq n-1$
\end{claim}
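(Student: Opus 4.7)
The plan is to reduce property $B(k)$ for $k\le n-1$ to the hypothesis $B(n-1)$ by a lifting-restriction trick: inflate any map $g\from\mathbb B^k\to X$ to a map on $\mathbb B^{n-1}$ by composing with a projection, apply $B(n-1)$ to get a good approximation, and then restrict back.

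Concretely, I would fix $k\le n-1$, view $\mathbb B^{n-1}$ as $\mathbb B^k\times\mathbb B^{n-1-k}$, and set $\pi\from\mathbb B^{n-1}\to\mathbb B^k$ to be the coordinate projection and $\iota\from\mathbb B^k\to\mathbb B^{n-1}$, $\iota(x)=(x,0)$, its section, so that $\pi\circ\iota=\mathrm{id}_{\mathbb B^k}$. Given $g\in C(\mathbb B^k,X)$, a compatible metric $\rho$ on $X$, and $\varepsilon>0$, form $F:=g\circ\pi\in C(\mathbb B^{n-1},X)$. By property $B(n-1)$, there is $F'\in C(\mathbb B^{n-1},X)$ that is $\varepsilon$-close to $F$ in the sup-metric and such that $F'(\mathbb B^{n-1})$ has empty interior in $X$. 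Set $g':=F'\circ\iota$. Then $\rho(g'(x),g(x))=\rho(F'(\iota(x)),F(\iota(x)))<\varepsilon$ for every $x\in\mathbb B^k$, so $g'$ lies in the chosen basic neighborhood of $g$.

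It remains to observe that $g'(\mathbb B^k)\subseteq F'(\mathbb B^{n-1})$. The latter set is compact, hence closed in $X$, and has empty interior by construction, so it is nowhere dense. Therefore $g'(\mathbb B^k)$, as a subset of a nowhere dense closed set, also has empty interior in $X$. This produces the required approximation of $g$ by a map with empty-interior image, which is exactly property $B(k)$.

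There is essentially no obstacle: the argument is a purely formal reduction that uses only the retraction structure of the cube $\mathbb B^{n-1}=\mathbb B^k\times\mathbb B^{n-1-k}$ together with the fact that compactness of the domain turns the compact-open topology into the sup-metric topology. The single conceptual point is the passage from ``empty interior'' to ``nowhere dense,'' which is legitimate because continuous images of compacta are closed.
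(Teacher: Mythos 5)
Your proposal is correct and is essentially the paper's argument: the paper also regards $\mathbb B^k$ as a subset of $\mathbb B^{n-1}$, lifts $f$ to $\mathbb B^{n-1}$ via the surjectivity of the restriction map (your composition with the projection is just an explicit such lift), applies $B(n-1)$, and restricts back, noting that a subset of a set with empty interior has empty interior. The only cosmetic difference is your detour through ``nowhere dense,'' which is not needed since any subset of a set with empty interior already has empty interior.
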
   
This is true for $k=n-1$, so let $k<n-1$ and $f\in C(\mathbb B^k,X)$, where $\mathbb B^k$ is assumed to be a subset of $\mathbb B^{n-1}$. Since the restriction map 
$\pi_{\mathbb B^k}:C(\mathbb B^{n-1},X)\to C(\mathbb B^k,X)$ is surjective, there is $\widetilde f\in C(\mathbb B^{n-1},X)$ with $\pi_{\mathbb B^k}(\widetilde f)=f$. Then $\widetilde f$ is approximated by maps $\widetilde g\in C(\mathbb B^{n-1},X)$ such that the interior of all $\widetilde g(\mathbb B^{n-1})$ is empty. Hence, the maps $g=\widetilde g|\mathbb B^k$ approximate $f$ and the interior of each $g(\mathbb B^k)$ is also empty.

\begin{claim}
Every point of $X$ is $LCC^k$, where $k\leq n-2$. 
\end{claim}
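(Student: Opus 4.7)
Fix $x\in X$, an open neighborhood $U$ of $x$ with compact closure, and $m\leq n-2$. I aim to construct a neighborhood $V\subset U$ of $x$ such that every $f:\mathbb S^m\to V\backslash\{x\}$ admits an extension $\widetilde f:\mathbb B^{m+1}\to U\backslash\{x\}$. The strategy combines three ingredients: the $LC^{m}$ extension property of the $ANR$ $X$; property $B(m+1)$, which Claim 1 makes available since $m+1\leq n-1$; and Proposition \ref{effros}, which sends a point close to $x$ onto $x$ by a near-identity self-map of $U$ that is a homeomorphism on a controlled set $K$.

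First, using the $LC^m$ property of $X$ at $x$, I would pick $V$ small enough that every map $\mathbb S^m\to V$ extends to a map $\mathbb B^{m+1}\to V'$, where $V'$ is a slightly larger open neighborhood of $x$ with $\overline{V'}\subset U$. Given $f:\mathbb S^m\to V\backslash\{x\}$, the number $d:=\rho(x,f(\mathbb S^m))$ is strictly positive since $f(\mathbb S^m)$ is compact. I would then choose an auxiliary $\varepsilon>0$ much smaller than $d$ and than $\rho(V',X\backslash U)$, set $K:=X\backslash N_\varepsilon(X\backslash U)$, and extract the corresponding $\delta>0$ from Proposition \ref{effros}. Extend $f$ to $\bar f:\mathbb B^{m+1}\to V'\subset K$; by property $B(m+1)$ (Claim 1), approximate $\bar f$ uniformly within $\min(\varepsilon,\delta)/100$ by $\bar g:\mathbb B^{m+1}\to K$ whose image has empty interior. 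Select $y\in N_\delta(x)\backslash\bar g(\mathbb B^{m+1})$ with $\rho(x,y)<\delta$ and invoke Proposition \ref{effros} to obtain $\lambda:U\to U$ that is $\varepsilon$-close to the identity, a homeomorphism on $K$, and satisfies $\lambda(y)=x$. Set $h:=\lambda\circ\bar g$. Since $\bar g(\mathbb B^{m+1})\subset K$ and $\lambda|K$ is injective, the equality $\lambda(y)=x$ with $y\notin\bar g(\mathbb B^{m+1})$ forces $x\notin h(\mathbb B^{m+1})$.

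It remains to patch the boundary. The restriction $h|_{\mathbb S^m}$ is uniformly close to $f$, so by the standard $ANR$ homotopy estimate (shrinking $\varepsilon$ and the $B(m+1)$-approximation further in terms of $d$ if necessary) there is a homotopy $H:\mathbb S^m\times[0,1]\to X$ with $H(\cdot,0)=h|_{\mathbb S^m}$, $H(\cdot,1)=f$ and tracks of diameter less than $d/2$. Then $\rho(H(z,t),x)\geq\rho(f(z),x)-d/2\geq d/2$, so the image of $H$ avoids $x$. Gluing a rescaled copy of $h$ on the inner half-ball $\mathbb B^{m+1}_{1/2}$ to $H$ on the outer collar $\{y:1/2\leq|y|\leq 1\}\cong\mathbb S^m\times[1/2,1]$ yields the required extension $\widetilde f:\mathbb B^{m+1}\to U\backslash\{x\}$. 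The main obstacle is exactly this patching step: the collar homotopy must stay in $X\backslash\{x\}$ even when $f$ comes arbitrarily close to $x$. The remedy is quantitative, using that $d=\rho(x,f(\mathbb S^m))>0$ is strictly positive and that both $\varepsilon$ and the $B(m+1)$-approximation parameter may be retroactively tightened against $d$ after $f$ is presented. This is also where Claim 1 is indispensable, for without property $B(m+1)$ the nowhere-dense map $\bar g$ (and hence the auxiliary point $y$ feeding Proposition \ref{effros}) would be unavailable.
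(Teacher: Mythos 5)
Your argument is correct, but it takes a genuinely different route from the paper's. The paper first reduces (via the homotopy extension property) to extending the nowhere-dense approximation $g=\widetilde g|\mathbb S^k$ itself, then runs a connectedness argument: it picks the path-component $W$ of $U\backslash g(\mathbb S^k)$ containing $p$, a point $b\in W\backslash\widetilde g(\mathbb B^{k+1})$, an arc $P\subset W$ from $b$ to $p$, and shows that the set $M$ of points $z\in P$ avoided by some extension of $g$ is open and closed in $P$, invoking Proposition \ref{effros} at each limit point of $M$. You instead apply Proposition \ref{effros} exactly once, to the nowhere-dense extension $\bar g$ itself: choose a missed point $y$ within $\delta$ of $x$, push it onto $x$ by $\lambda$, and conclude that $\lambda\circ\bar g$ misses $x$ because $\bar g(\mathbb B^{m+1})\subset K$ and $\lambda|K$ is injective. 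This works precisely because you arrange the approximating extension to have image inside $K$ and boundary restriction uniformly close to $f$ --- control that the paper's generic extensions $h_z$ (which need not have image in $K$, nor controlled boundary behaviour beyond agreeing with $g$) do not enjoy, and which is what forces the paper into the arc/transport argument. Your version is shorter and dispenses entirely with $W$, the arc $P$, and the local path-connectedness of $U\backslash g(\mathbb S^m)$; the paper's version is the more robust template (it only needs \emph{some} extension missing \emph{some} point of $W$). Two small points you should make explicit: the injectivity step needs $y\in K$ as well as $\bar g(\mathbb B^{m+1})\subset K$, which holds once you also require $\delta\leq\varepsilon<\frac{1}{2}\rho(x,X\backslash U)$; and the quantifier order is fine because the $LCC^k$ definition lets $\varepsilon$, $\delta$ and the $B(m+1)$-approximation parameter depend on $f$ (hence on $d=\rho(x,f(\mathbb S^m))$), with only $V$ chosen in advance.
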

Let $X^*$ be the one-point compactification of $X$ and $\rho$ be a metric on $X^*$ generating its topology. Take a point $p\in X$ and a connected open neighborhood $U$ of $p$ in $X$ and let $V$ be another neighborhood of $p$ such that $\rm{cl} V$ is contractible in $U$.
 Choose any map $f:\mathbb S^k\to V\backslash\{p\}$. We are going to show that $f$ can be extended to a map
$\widetilde f:\mathbb B^{k+1}\to U\backslash\{p\}$. Since $U\backslash\{p\}$ has the homotopy extension property, it suffices to show that $f$ can be approximated by maps $f':\mathbb S^k\to V\backslash\{p\}$ such that each $f'$ has an extension $\widetilde f':\mathbb B^{k+1}\to U\backslash\{p\}$. 
To this end, we extend $f$ to a map $\widetilde f:\mathbb B^{k+1}\to U$. According to Claim 1, $\widetilde f$ is approximated by maps $\widetilde g:\mathbb B^{k+1}\to U$ such that each $\widetilde g(\mathbb B^{k+1})$ has an empty interior. Then the restriction maps $g=\widetilde g|\mathbb S^k$ approximate $f$ and  
$g(\mathbb S^k)\subset V\backslash\{p\}$ for each $\widetilde g$ close enough to $\widetilde f$. So, the proof is reduced to show that any such $g$ can be extended to a map from $\mathbb B^{k+1}$ into $U\backslash\{p\}$. To this end, let $W$ be the path-component of   
$U\backslash g(\mathbb S^k)$ containing the point $p$. Since $U\backslash g(\mathbb S^k)$ is locally path-connected, $W$ is open in $U$. Hence, 
there is a point $b\in W\backslash\widetilde g(\mathbb B^{k+1})$ (recall that the interior of $\widetilde g(\mathbb B^{k+1})$ is empty).
Hence, there is an arc $P\subset W$ joining the points $b$ and $p$. 

Next, following an idea of Krupski from the proof of \cite[Theorem 2.6]{kr}, consider the set
$$M=\{z\in P:\hbox{ }\mbox{there exists a map}\hbox{~} h_z\colon\mathbb B^{k+1}\to U\backslash\{z\}\hbox{~}\mbox{extending}\hbox{~}g\}.$$
Obviously, $M$ is open in $P$ and $b\in M$. We are going to show $M=P$. % and that will provide a map $h_p:\mathbb B^{k+1}\to U\backslash\{p\}$ extending $g$.
Because $P$ is connected, it suffices to show $M$ is closed in $P$.
So, let $x\in\rm{cl}~M$ and  $\varepsilon<\frac{1}{2}\rho(x,X\backslash U)$  be a positive number such that if a map $g':\mathbb S^k\to U\backslash\{x\}$ is $\varepsilon$-close to $g$, then $g'$ and $g$ are homotopic in $U\backslash\{x\}$. Let $\delta$ be the number from Proposition \ref{effros} corresponding to $\varepsilon$ and $x$. Take a point $y\in M$ with $\rho(x,y)<\delta$. Then there exist a map
$h_y:\mathbb B^{k+1}\to U\backslash\{y\}$ extending $g$ and a map $\lambda:U\to U$ such that $\lambda(y)=x$, $\lambda$ is a homeomorphism on the set $X\backslash N_{\varepsilon}(X\backslash U)$ and $\lambda$ is $\varepsilon$-close to the identity map on $U$. Let $h=\lambda\circ h_y$. Evidently, $h$ maps $\mathbb B^{k+1}$ into $U\backslash\{x\}$. Since $h|\mathbb S^k$ and $g$ are $\varepsilon$-close, they are homotopic in $U\backslash\{x\}$. Hence, $g$ can be extended to a map from $\mathbb B^{k+1}$ to $U\backslash\{x\}$. This means that $x\in M$, so $M$ is closed in $P$.
Therefore, $p\in M$, which provides a map $h_p:\mathbb B^{k+1}\to U\backslash\{p\}$ extending $g$.
Thus, $\{p\}$ is a $Z_{n-1}$-set in $X$.  $\Box$

\section{Proof of Theorem 1.2}
\begin{pro}\label{dim}
If $X$ is an  almost homology $n$-manifold, then $\dim X\geq n$.  
\end{pro}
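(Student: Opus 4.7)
The plan is to argue by contradiction. Suppose $\dim X = m \leq n-1$, and seek a point $x \in X$ together with an integer $k \leq m \leq n-1$ for which $H_k(X, X \setminus \{x\}; \mathbb Z) \neq 0$; this will contradict the almost homology $n$-manifold hypothesis, since that hypothesis requires all such local homology groups to vanish for $k \leq n-1$.

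First, I would localize the dimension. Because $X$ is a locally connected separable metric space of finite covering dimension $m$, standard dimension theory for separable metric spaces (in particular the identity $\dim X = \mathrm{Ind}\,X$) supplies a point $x \in X$ at which the local dimension equals $m$ and a neighborhood basis $\{U_i\}_{i\ge 1}$ at $x$ of open connected sets with $\dim \partial U_i \le m-1$. The idea is then to probe $X$ at such a point, where the full $m$-dimensional behaviour is concentrated.

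Second, I would invoke Alexandroff's theorem, which for a finite-dimensional separable metric space identifies the covering dimension with the \v Cech cohomological dimension $\dim_{\mathbb Z}$. Applied along the neighborhood basis $\{U_i\}$, this produces, for small enough $i$, nonvanishing of a local \v Cech cohomology group at $x$ in degree $m$, i.e.\ $\check H^m(\overline{U_i}, \overline{U_i}\setminus U_i; \mathbb Z)\neq 0$, and hence a nonzero limit describing the local \v Cech cohomology at $x$.

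The final and most delicate step is to convert this \v Cech-cohomological non\-vanishing into a nonzero singular local homology group. I would use excision to identify $H_k(X, X\setminus\{x\}) \cong H_k(U_i, U_i\setminus\{x\})$, and then combine the natural comparison maps available for locally connected separable metric spaces with a universal-coefficients / direct-limit argument to deduce that $H_k(X, X\setminus\{x\}) \neq 0$ for some $k \leq m \leq n-1$. The main obstacle will be precisely this comparison: without an $LC^{m-1}$ or $ANR$ assumption, singular and \v Cech theories can diverge, so care is required in transporting the dimension-theoretic information, which is most naturally expressed in \v Cech terms, back into the singular framework in which the almost homology $n$-manifold condition is formulated.
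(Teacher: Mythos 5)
Your overall strategy --- localize the covering dimension at a point, convert it into cohomological dimension via Alexandroff's theorem, and then try to read off a nonzero singular local homology group --- is genuinely different from the paper's argument, but it stalls exactly at the step you yourself flag as delicate, and that step is not a technicality: it is the entire content of the proposition. There is no general comparison or universal-coefficient/direct-limit device that converts $\check H^m(\overline{U_i},\overline{U_i}\setminus U_i;\mathbb Z)\neq 0$ into $H_k(X,X\setminus\{x\};\mathbb Z)\neq 0$ for some $k\le m$ and some point $x$. The hypotheses in force are only that $X$ is separable, completely metrizable and locally connected; no $ANR$, $LC^{m-1}$ or local compactness assumption is available (the proposition is invoked, for instance, for the Menger compactum $\mu^n$, which is not $LC^n$, and in the chain of corollaries for nowhere locally compact spaces). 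For such spaces singular and \v{C}ech theories genuinely diverge, and the dualities (sheaf-theoretic or Borel--Moore) that would relate local cohomology to local homology do not operate in the singular framework in which the hypothesis ``$H_k(X,X\setminus\{x\})=0$ for $k\le n-1$'' is stated. So as written the proposal is a plan with its central lemma missing rather than a proof.

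The paper's route avoids cohomology altogether and uses the local homology hypothesis directly. If $\dim X\le n-1$, then a connected open set $U$ can be separated by a closed set $\Gamma$ with $\dim\Gamma\le n-2$ (classical dimension theory). Since every point of $X$ is a homological $Z_{n-1}$-set and $\dim\Gamma\le n-2$, Theorem 4.3 of Banakh--Cauty--Karassev yields that $\Gamma$ is a homological $Z_1$-set, so $H_1(U,U\setminus\Gamma)=0$; the long exact sequence of the pair $(U,U\setminus\Gamma)$ then forces $\widetilde H_0(U\setminus\Gamma)=0$, i.e.\ $U\setminus\Gamma$ is connected, contradicting the choice of $\Gamma$. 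If you want to rescue your argument, the realistic fix is to abandon the Alexandroff/\v{C}ech step and run this separator argument instead, since it extracts the dimension-theoretic consequence entirely within singular homology via the homological $Z_n$-set calculus.
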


\begin{proof}
This fact was established in \cite[Proposition 1.7]{kr} in case $X$ is locally compact. A similar proof works and in the general case. Indeed, choose a connected open set $U\subset X$. If $\dim X<n$ then there exists a closed subset $\Gamma\subset X$ such that $\dim\Gamma\leq n-2$ and $\Gamma$ separates $U$. So, $U\backslash\Gamma$ is disconnected while $U$ is path-connected. Because each point of $X$ is a homological $Z_{n-1}$-set and $\dim\Gamma\leq n-2$, according to \cite[Theorem 4.3]{bck}, $\Gamma$ is a homological $Z_1$-set in $X$. Therefore, we have the following exact sequence, where $\widetilde H$ denotes the reduced homology groups: 
$$...\to H_1(U,U\backslash\Gamma)\to\widetilde H_0(U\backslash\Gamma)\to\widetilde H_0(U)\to... $$   
Since both groups $H_1(U,U\backslash\Gamma)$ and $\widetilde H_0(U)$ are trivial, so is $\widetilde H_0(U\backslash\Gamma)$. This means that $U\backslash\Gamma$ is connected, a contradiction.
\end{proof}

If $\mathcal U$ is a cover of $X$, we say that two maps $f,g:Z\to X$ are $\mathcal U$-close if for every $z\in Z$ there is $U_z\in\mathcal U$ with
$f(z),g(z)\in U_z$. Similarly, the maps $f,g$ are $\mathcal U$-homotopic if there is a homotopy $F(z,t)$ between them such that the family $\{F(\{z\}\times\mathbb I):z\in Z\}$ refines $\mathcal U$. If in the above definition, $\rho$ is a metric on $X$ and $\mathcal U$ consists of all open balls $B(x,\varepsilon)=\{y\in X:\rho(x,y)<\varepsilon\}$, $x\in X$, any $\mathcal U$-close (resp., $\mathcal U$-homotopic) maps are called $\varepsilon$-close (resp., $\varepsilon$-homotopic).

The following fact follows from the properties of metrizable $LC^k$-spaces.
\begin{lem}
Let $(Y,\rho)$ be a metric $LC^{k}$-space and $f:\mathbb S^k\to Y$ be a map. Then there is a $\delta>0$ such that $f$ and $g$ are homotopic for every $\delta$-close map $g:\mathbb S^k\to Y$ to $f$. 
Moreover, by the Homotopy Extension Theorem, if a map $g:\mathbb S^k\to Y$ is homotopic to $f$ and $g$ is extendable to a map $\widetilde g:\mathbb B^{k+1}\to Y$, then $f$ is also extendable over $\mathbb B^{k+1}$.
\end{lem}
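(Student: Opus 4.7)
The first assertion is the classical statement that sufficiently close maps into an $LC^k$-space are homotopic; the proof proceeds by constructing a homotopy $F:\mathbb S^k\times[0,1]\to Y$ skeleton by skeleton on a fine cell decomposition of the cylinder. The second assertion is a direct application of the Homotopy Extension Property, so the real work is concentrated in the first part.

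For the first part, I would start by unpacking the $LC^k$ hypothesis locally along $f(\mathbb S^k)$. For each $y\in f(\mathbb S^k)$, iterating the defining property of $LC^k$ gives a descending chain of neighborhoods $W_y\supset V_y^{(k)}\supset V_y^{(k-1)}\supset\cdots\supset V_y^{(0)}$ with the property that every map $\mathbb S^m\to V_y^{(m)}$ extends over $\mathbb B^{m+1}$ into $V_y^{(m+1)}$, for all $m\le k$. Using compactness of $f(\mathbb S^k)$, I would extract a finite subfamily $\{(W_i,V_i^{(m)})\}$ covering the image, and then choose $\delta>0$ smaller than the Lebesgue number of the cover $\{f^{-1}(V_i^{(0)})\}$ of $\mathbb S^k$, also small enough that the resulting neighborhoods on $\mathbb S^k$ have images under $f$ of small diameter relative to the $W_i$'s.

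Given a $\delta$-close map $g$, I would triangulate $\mathbb S^k\times\{0,1\}$ so finely that for every closed simplex $\sigma$ of dimension $m\le k$, both $f(\sigma\times\{0\})$ and $g(\sigma\times\{1\})$ lie in a common $V_i^{(0)}$. I would then extend this to a triangulation of the cylinder $\mathbb S^k\times[0,1]$ containing the top and bottom triangulations, and define $F$ by induction on the skeleta of the cylinder: over a $(k+1)$-cell whose boundary image lies in some $V_i^{(m)}$, triviality of $\pi_m(V_i^{(m)})\to\pi_m(V_i^{(m+1)})$ supplies a continuous extension of $F$ into $V_i^{(m+1)}\subset W_i$. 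Pasting these extensions yields a continuous $F:\mathbb S^k\times[0,1]\to Y$ with $F(\cdot,0)=f$ and $F(\cdot,1)=g$.

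The second assertion follows immediately: the pair $(\mathbb B^{k+1},\mathbb S^k)$ has the Homotopy Extension Property, so given the homotopy $H:\mathbb S^k\times[0,1]\to Y$ from $g$ to $f$ and the extension $\widetilde g:\mathbb B^{k+1}\to Y$, one lifts $H$ to $\widetilde H:\mathbb B^{k+1}\times[0,1]\to Y$ with $\widetilde H(\cdot,0)=\widetilde g$; the terminal map $\widetilde f=\widetilde H(\cdot,1)$ then extends $f$. The main obstacle is the bookkeeping in the first part: the nested families $V_i^{(0)}\subset\cdots\subset V_i^{(k)}\subset W_i$ must be chosen before the mesh of the triangulation is fixed so that the inductive extension over higher-dimensional cells always stays within the same $W_i$ where the null-homotopies are available.
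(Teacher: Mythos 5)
The paper offers no proof of this lemma at all --- it is stated as a known consequence of the $LC^k$ property and used as a black box --- so there is nothing internal to compare against; your sketch is the standard folklore argument that the author is implicitly invoking, and it is essentially correct. Two points deserve tightening if this were written out in full: (i) in the inductive step you write ``$(k+1)$-cell'' where you mean a cell of dimension $m+1$ with boundary an $m$-sphere (the base of the induction, joining $f(v)$ to $g(v)$ over vertices, uses $LC^0$); and (ii) the boundary of a given cell of the cylinder meets several different chains $V_i^{(m)}$, so to place that boundary inside a \emph{single} $V_j^{(m+1)}$ you need the covers at each level to be chosen as star-refinements of the next, not merely refinements --- this is exactly the bookkeeping you flag at the end, and it is where the $\delta$ must ultimately be extracted. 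With those standard adjustments the argument is complete, and the deduction of the second assertion from the homotopy extension property of the pair $(\mathbb B^{k+1},\mathbb S^k)$ is correct as stated.
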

\begin{pro}
Let $X$ be a strongly locally homogeneous $LC^2$-space which is almost homology $n$-manifold with $n\geq 3$. Then every point of $X$ is a homotopical $Z_2$-set in $X$.  
\end{pro}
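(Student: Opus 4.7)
The plan is to reduce, via Proposition~2.1(i) and the characterisation of $Z_k$-sets for closed nowhere dense subsets of metric $LC^{k-1}$-spaces used in the proof of Theorem~1.1, the conclusion to showing that each singleton $\{p\}$ is $LCC^1$ in $X$, and then to adapt Claim~2 of the proof of Theorem~1.1, with strong local homogeneity taking the place of Proposition~2.4 and a dimension count replacing property $B(n-1)$. The hypotheses are legitimate for the reduction: Proposition~3.1 yields $\dim X\geq n\geq 3$, and homogeneity excludes isolated points, so each singleton is closed and nowhere dense. The case $m=0$ of $LCC^1$ follows, for a connected neighbourhood $U$ of $p$, from the exact sequence
\[ H_1(U, U\setminus\{p\}) \to \widetilde H_0(U\setminus\{p\}) \to \widetilde H_0(U) \to 0, \]
whose outer terms vanish since $\{p\}$ is, by excision, a homological $Z_{n-1}$-set with $n-1\geq 2$ and $U$ is connected.

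For the case $m=1$, fix a neighbourhood $U$ of $p$, use $LC^1$ to pick $V\subset U$ in which every loop is null-homotopic in $U$, and extend a given map $f\from\mathbb S^1\to V\setminus\{p\}$ to $\widetilde f\from\mathbb B^2\to U$. The image $\widetilde f(\mathbb B^2)$ has topological dimension at most $2$. On the other hand, every non-empty open subset of $X$ is again an almost homology $n$-manifold by excision, hence of dimension at least $n\geq 3$ by Proposition~3.1, so $\widetilde f(\mathbb B^2)$ has empty interior in $X$. Let $W$ be the path-component of $p$ in $U\setminus f(\mathbb S^1)$, which is open by local path-connectedness, pick $b\in W\setminus\widetilde f(\mathbb B^2)$, and join $b$ to $p$ by an arc $P\subset W$.

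Define $M=\{z\in P : f\text{ extends to a map }\mathbb B^2\to U\setminus\{z\}\}$, so $b\in M$ and $M$ is open in $P$. To show $M$ is closed, fix $x\in\overline M\cap P$; then $\rho(x, f(\mathbb S^1))>0$ because $P\subset U\setminus f(\mathbb S^1)$. By strong local homogeneity at $x$ choose a neighbourhood $N\subset U$ of $x$ with $\overline N\cap f(\mathbb S^1)=\varnothing$ such that for every $y\in N$ there is a self-homeomorphism $\lambda$ of $X$ with $\lambda(y)=x$ and $\lambda=\mathrm{id}$ on $X\setminus N$. Pick $y\in M\cap N$ with extension $h_y\from\mathbb B^2\to U\setminus\{y\}$ and set $h=\lambda\circ h_y$. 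Then $h$ sends $\mathbb B^2$ into $U\setminus\{x\}$ because $\lambda$ maps $U\setminus\{y\}$ homeomorphically onto $U\setminus\{x\}$, and $h|_{\mathbb S^1}=\lambda\circ f=f$ because $\lambda$ is the identity on $f(\mathbb S^1)\subset X\setminus N$. Hence $x\in M$, so $M=P$ and $p\in M$, producing the required extension.

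The main obstacle, and the place where strong local homogeneity pays off directly, is the closedness of $M$: in Theorem~1.1 the analogous step invoked Proposition~2.4 together with a homotopy-refinement using $LC^{n-1}$ to compare $h_y|_{\mathbb S^1}$ with the original $f$. Here, by shrinking $N$ below $\rho(x, f(\mathbb S^1))$, strong local homogeneity supplies a $\lambda$ whose support is disjoint from $f(\mathbb S^1)$, so that $\lambda\circ h_y$ extends $f$ on the nose and no homotopy step is needed. The auxiliary dimension-theoretic fact that $\widetilde f(\mathbb B^2)$ has empty interior, proved from Proposition~3.1 rather than from a property $B$ hypothesis, is what starts the argument.
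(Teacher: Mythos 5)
Your overall architecture coincides with the paper's: reduce, via Proposition 2.1(i) and the $LCC$-characterization of $Z_k$-sets, to showing that every singleton is $LCC^1$, and then run Krupski's connectedness argument for the set $M$ along an arc $P$, using strong local homogeneity to convert an extension missing $y$ into one missing $x$. Your treatment of the closedness of $M$ is in fact a little cleaner than the paper's: by keeping the support of $\lambda$ disjoint from $f(\mathbb S^1)$ you get $\lambda\circ h_y|_{\mathbb S^1}=f$ exactly, whereas the paper settles for $\delta$-closeness and then invokes a homotopy together with the Homotopy Extension Theorem. The $m=0$ case via the exact sequence of the pair $(U,U\setminus\{p\})$ is also fine.

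There is, however, a genuine gap at the step that seeds the whole argument: the assertion that $\widetilde f(\mathbb B^2)$ has topological dimension at most $2$. Continuous maps can raise dimension — by the Hahn--Mazurkiewicz/Peano phenomenon, $\mathbb B^2$ maps onto $\mathbb B^3$ and even onto the Hilbert cube — so the image of an arbitrary extension $\widetilde f$ supplied by the $LC^1$ property may well contain a nonempty open subset of $X$. Hence your dimension count does not show that $\widetilde f(\mathbb B^2)$ has empty interior, and without that you cannot produce the point $b\in W\setminus\widetilde f(\mathbb B^2)$ with which $M$ is nonempty. This is exactly the difficulty that the hypothesis $B(n-1)$ is designed to bypass in Theorems 1.1 and 1.2, where one may first replace $\widetilde f$ by a nearby map whose image is a priori nowhere dense; that hypothesis is not available in Proposition 3.3, and the paper instead proves Claim 3 (density in $C(\mathbb S^1,W)$ of maps with non-separating image, obtained because arcs joining points of a countable dense set are homological $Z_1$-sets and hence $Z_1$-sets) and first replaces $f$ by such an approximation before choosing $b$. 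Your proof needs either that reduction or some other argument producing an extension with nowhere dense image; as written, the justification is false. A minor further point: ``homogeneity excludes isolated points'' is not the operative reason that $\{p\}$ is nowhere dense — what is used is that each point is a homological $Z_0$-set and therefore a $Z_0$-set by Proposition 2.1(iv).
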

\begin{proof}
%Because $X$ is $LC^2$, it suffices to show all points of $X$ are $Z_2$-sets in $X$. 
We fix a metric $\rho$ generating the topology of $X$. 
\begin{claim}
Let $W$ be a connected open set in $X$ and $p\in W$. Then the space $C(\mathbb S^1,W)$ contains a dense set of maps $f$ such that $f(\mathbb S^1)$ doesn't separate $W$.
\end{claim}
Choose a countable dense set $\{x_k:k\geq 1\}$ in $W$. Since $W$ is connected and locally connected complete metric space, it is path-connected. So, for every $k,m$ there is an arc $A(k,m)$ joining the points $x_k$ and $x_m$. Because the points of $W$ are homological $Z_{n-1}$-sets in $W$, by \cite[Theorem 4.3]{bck}, each $A(k,m)$ is a homological $Z_1$-set in $W$. Hence, according to Proposition 2.1(iv), all $A(k,m)$ are $Z_1$-sets. Therefore, the sets $G(k,m)=\{f\in C(\mathbb S^1,W):f(\mathbb S^1)\cap A(k,m)=\varnothing\}$ are open and dense in $C(\mathbb S^1,W)$ (the density of $G(k,m)$ follows from the fact that $\mathbb S^1$ is a union of two sets each homeomorphic to $\mathbb B^1$). %The set $G_0=\{f\in C(\mathbb S^1,W):p\not\in f(\mathbb S^1)\}$ is also dense in $C(\mathbb S^1,W)$ because $\{p\}$ is a $Z_1$-set in $W$. 
So, the set $G=\bigcap_{k,m}G(k,m)\cap G_0$ is also dense in $C(\mathbb S^1,W)$. We claim that $f(\mathbb S^1)$ doesn't separate $W$ for all $f\in G$. Indeed, if $f(\mathbb S^1)$ separates $W$ for some $f\in G$, we can choose points $x_k$ and $x_m$  contained in different components of $W\backslash f(\mathbb S^1)$ (recall that the components of $W\backslash f(\mathbb S^1)$ are open because $W$ is locally connected). Consequently, the arc $A(k,m)$ should intersect $f(\mathbb S^1)$, a contradiction. 

Since $X$ is $LC^2$, the homotopical $Z_2$-sets in $X$ coincide with $Z_2$-sets in $X$. Moreover, $X$ does not contain isolated point because every $x\in X$ is a homological $Z_0$-set and hence a $Z_0$-set. 
Therefore, by \cite[Lemma 2.1]{vv1}, the following claim completes the proof. 
\begin{claim}
Every point of $X$ is $LCC^1$. 
\end{claim}
We take a point $p\in X$ and neighborhoods $U, V$ of $p$ such that $U$ is connected, $V\subset U$ and every map from $l:\mathbb S^1\to V$ is extendable to a map $\widetilde l:\mathbb B^2\to U$. We show that 
any map $f:\mathbb S^1\to V\backslash\{p\}$ can be extended to a map from $\mathbb B^2$ to $U\backslash\{p\}$. Since, by Claim 3, $f$ can be approximated by maps $f'\in C(\mathbb S^1,U\backslash\{p\})$ such that $U\backslash f'(\mathbb S^1)$ is connected, according to Lemma 3.2, we can assume that $f$ also has that property. Next, extend $f$ to a map $\widetilde f:\mathbb B^2\to U$ and choose an arc $P\subset U\backslash f(\mathbb S^1)$ joining $p$ and a point $b\in U\backslash\widetilde f(\mathbb B^2)$.  As in the proof of Claim 2, it suffices to show that the set 
$$M=\{z\in P:\hbox{ }\mbox{there exists a map}\hbox{~} h_z\colon\mathbb B^{2}\to U\backslash\{z\}\hbox{~}\mbox{extending}\hbox{~}f\}$$
coincides with $P$. Because $b\in M$ and obviously $M$ is open in $P$, the crucial step is to prove $M$ is closed in $P$. So, let $x\in\rm{cl}M$ and $\varepsilon=\min\{\rho(f(\mathbb S^1),X\backslash U),\rho(x,f(\mathbb S^1))\}$. Since $U\backslash\{x\}$ is $LC^2$, there exists a positive number $\delta<\varepsilon$ satisfying the hypotheses of Lemma 3.2 for the space $U\backslash\{x\}$. 
 Next, using the strong local homogeneity of $X$, we choose a neighborhood $O_x$ of diameter $<\delta$ and a homeomorphism $g$ on $X$ such that
$g$ is the identity on $X\backslash O_x$ and $g(y)=x$, where $y\in M\cap O_x$. Let $h=g\circ h_y$. Then $h$ is a map from $\mathbb B^2$ into $U\backslash\{x\}$ and, since $\rho(g(z),z)<\delta$ for all $z\in X$, $h|\mathbb S^1$ and $f$ are $\delta$-close. Consequently, they are homotopic in $U\backslash\{x\}$. Hence, by the Homotopy Extension Theorem, $f$ can be extended to a map from $\mathbb B^2$ to $U\backslash\{x\}$, which means that $x\in M$. Therefore, $M$  is a nonempty open and closed subset of $P$, so $M=P$.
\end{proof}
Proposition 3.3 and Proposition 2.1 imply the following (see the proof of Theorem 2.3):
\begin{cor}
Let $X$ be a strongly locally homogeneous $LC^{n-1}$-space. Then $X$ is an almost homology $n$-manifold if and only if $\{x\}$ is a $Z_{n-1}$-set in $X$ for all $x\in X$. 
\end{cor}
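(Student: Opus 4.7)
The strategy is to follow the template already given in the proof of Theorem 2.3, but with the locally-compact/Krupski input replaced by Proposition 3.3, which delivers the same conclusion (singletons are homotopical $Z_2$-sets) in the strongly locally homogeneous $LC^2$ setting. The argument naturally splits according to whether $n\leq 2$ or $n\geq 3$, and in both cases the work reduces to routing the hypothesis through Proposition 2.1.

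For the forward direction, suppose $X$ is an almost homology $n$-manifold. By the excision observation recorded just before Proposition 2.1, this is exactly the statement that each singleton $\{x\}$ is a homological $Z_{n-1}$-set. If $n\in\{1,2\}$ then $n-1\in\{0,1\}$, and Proposition 2.1(iv) immediately promotes $\{x\}$ to a $Z_{n-1}$-set. If $n\geq 3$, then $LC^{n-1}\subseteq LC^2$, so Proposition 3.3 applies and yields that every $\{x\}$ is a homotopical $Z_2$-set. Since $X$ is also $LC^1$ and $\{x\}$ is already known to be a homological $Z_{n-1}$-set, Proposition 2.1(v) upgrades the two pieces of information to the statement that $\{x\}$ is a homotopical $Z_{n-1}$-set; Proposition 2.1(ii) then gives that $\{x\}$ is a $Z_{n-1}$-set.

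For the converse, assume each $\{x\}$ is a $Z_{n-1}$-set. Since $X$ is $LC^{n-1}$, Proposition 2.1(i) gives that $\{x\}$ is a homotopical $Z_{n-1}$-set, and Proposition 2.1(iii) then gives that $\{x\}$ is a homological $Z_{n-1}$-set. By the excision reformulation this is precisely $H_k(X,X\setminus\{x\})=0$ for all $k\leq n-1$, so $X$ is an almost homology $n$-manifold.

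The substantive work is already absorbed inside Proposition 3.3; the remaining task for the corollary is a bookkeeping argument through the five parts of Proposition 2.1. The only subtlety is to verify that the connectivity hypotheses align across cases, which they do because $LC^{n-1}$ is strong enough to feed both Proposition 3.3 (via $LC^2$, when $n\geq 3$) and Proposition 2.1(i), (v) (via $LC^{n-1}$ and $LC^1$ respectively). No new geometric or homological content is needed beyond what Proposition 3.3 has already supplied.
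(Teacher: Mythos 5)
Your proof is correct and follows essentially the same route the paper intends: the paper proves this corollary by simply citing Proposition 3.3 and Proposition 2.1 and pointing back to the proof of Theorem 2.3, which is exactly the case split and bookkeeping through Proposition 2.1(i)--(v) that you carry out, with Proposition 3.3 replacing the Krupski/locally compact input for $n\geq 3$. Your converse direction via 2.1(i) and 2.1(iii) is the right adaptation of Theorem 2.3's ANR argument to the weaker $LC^{n-1}$ hypothesis.
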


{\em Proof of Theorem $1.2$.} The proof is almost identical with the proof of Theorem 1.1. The only difference is the proof that every point of $X$ is $LCC^k$ for all $k\leq n-2$ assuming that $X$ is a strongly locally homogeneous $LC^{n-1}$-space satisfying the condition $B(n-1)$. Choose a point $p\in X$, its connected neighborhood $U$ in $X$ and another neighborhood $V$ of $p$ such that $V\subset U$ and every map $f':\mathbb S^k\to V$ is extendable to a map $\widetilde f':\mathbb B^{k+1}\to U$. 
Let $f:\mathbb S^k\to U\backslash\{p\}$.
Following the arguments from Claim 2, we extend $f$ to a map $\widetilde f:\mathbb B^{k+1}\to U$ and approximate $\widetilde f$ by maps $\tilde g:\mathbb B^{k+1}\to U$ such that 
the interior of $\tilde g(\mathbb B^{k+1})$ is empty. Then the restrictions $g=\tilde g|\mathbb S^k$ approximate $f$, and according to Lemma 3.2, it suffices to prove that any such a map $g$ can be extended to a map from $\mathbb B^{k+1}$ into $U\backslash\{p\}$. Next, take a path-component $W$ of $U\backslash g(\mathbb S^k)$ containing the point $p$ and choose a point $b\in W\backslash\widetilde f(\mathbb B^{k+1})$ and an arc $P\subset W$ joining $p$ and $b$.
According to Claim 2, that proof is reduced to show that the set
$M=\{z\in P:\hbox{ }\mbox{there exists a map}\hbox{~} h_z\colon\mathbb B^{k+1}\to U\backslash\{z\}\hbox{~}\mbox{extending}\hbox{~}g\}$
coincides with $P$. Finally, the proof of the last fact follows the arguments from Claim 4 with $\mathbb S^1$ and $\mathbb B^2$ replaced by $\mathbb S^k$ and $\mathbb B^{k+1}$, respectively.  $\Box$
%\section{Appendix} 
%We say that a closed set $A\subset X$ is a strong $Z_n$-set if for every open cover $\mathcal U$ of $X$ and every sequence of maps $\{f_k\}\subset C(\mathbb B^n,X)$ there is a sequence $\{g_k\}\subset C(\mathbb B^n,X)$ such that each $g_k$ is $\mathcal U$-close to $f_k$ and $\rm{cl}(\bigcup_{k\geq 1}g_k(\mathbb B^n))\cap A=\varnothing$. When $\mathbb B^n$ is replaced by Hilbert cube $\mathbb Q$, we obtain the definition of strong $Z$-sets introduced in \cite{bbmw}.

%Below we formulate some results concerning strong $Z_n$-sets of strongly locally homogeneous spaces. The proofs are similar to the proofs from the previous sections.
%\begin{pro}
%If $X$ is an $LC^{n-1}$-space, then a closed nowhere dense set $A\subset X$ is a strong $Z_n$-set in $X$ if and only if  $A$ has the strong $LCC^{n-1}$-property:  

%\end{pro}

%\begin{thm}
%Let $X$ be a strongly locally homogeneous $LC^{n-1}$-space. Then every point of $X$ is a strong $Z_n$-set in $X$ if and only $X$ has the following property $sB(n-1)$: For every open cover $\mathcal U$ of $X$ and every sequence of maps $\{f_k\}\subset C(\mathbb B^n,X)$ there is a sequence $\{g_k\}\subset C(\mathbb B^n,X)$ such that each $g_k$ is $\mathcal U$-close to $f_k$ and $\rm{cl}(\bigcup_{k\geq 1}g_k(\mathbb B^n))

%\end{thm}

\end{document}